\newcommand{\nc}{\newcommand}
\nc{\bC}{\bold{C}} \nc{\bN}{\Bbb{N}} \nc{\cF}{\mathcal{F}}
\nc{\cE}{\mathcal{E}} \nc{\cR}{\mathcal{R}} \nc{\cM}{\mathcal{M}}
\nc{\al}{\alpha} \nc{\bt}{\beta} \nc{\gm}{\gamma} \nc{\dl}{\delta}
\nc{\om}{\omega} \nc{\sg}{\sigma} \nc{\Sg}{\Sigma} \nc{\vf}{\varphi}
\nc{\ve}{\varepsilon} \nc{\os}{\overset} \nc{\ol}{\overline}
\nc{\ul}{\underline} \nc{\us}{\underset} \nc{\sbs}{\subset}
\nc{\bsl}{\backslash} \nc{\Ra}{\Rightarrow}
\nc{\lra}{\longrightarrow} \nc{\all}{\allowdisplaybreaks}
\nc{\Codes}{\operatorname{{\bold{Codes}}}}
\nc{\RegMono}{\operatorname{\mathcal{R}{\rm{eg}\mathcal{M}{\rm{ono}\!}}}}
\nc{\RegEpi}{\operatorname{\mathcal{R}{\rm{eg}\mathcal{E}{\rm{pi}\!}}}}
\nc{\Mn}{\operatorname{\mathcal{M}{\rm{ono}\!}}}
\nc{\Ep}{\operatorname{\mathcal{E}{\rm{pi}\!}}}
\nc{\Rg}{\operatorname{\mathcal{R}{\rm{eg}\!}}}
\nc{\Ob}{\operatorname{Ob\!}}
\numberwithin{equation}{section}
\newtheorem{theo}{\ \ \ Theorem}[section]
\newtheorem{lem}[theo]{\ \ \ Lemma}
\newtheorem{prop}[theo]{\ \ \ Proposition}
\newtheorem{cor}[theo]{\ \ \ Corollary}
\theoremstyle{definition}
\theoremstyle{remark}
\begin{document}

\title[]
{Ideals in BIT speciale varieties}

\author{Dali Zangurashvili}

\maketitle

% Abstract.
\begin{abstract}
Ideals in BIT speciale varieties are characterized. In particular, it is proved that, for any finitary BIT speciale variety, there is a finite set of ideal terms determining ideals. Several ideal term sets of this kind are given. For the variety of groups one of these sets consists of the terms $y_1y_2$, $xyx^{-1}$, $x^{-1}y^{-1}x$ (and $y$ which can be ignored), while for rings it consists of the terms $y_1+y_2$, $-y$, $xy$, $yx$. For each of the following varieties -- groups with multiple operators, semi-loops, and divisible involutory groupoids -- one of the term sets found in this paper almost coincides with the term sets found earlier for these particular varieties by resp. Higgins, B\v{e}lohl\'{a}vek and Chajda, and Hala\v{s}. The coincidence is precise in the case of divisible involutory groupoids. For loops (resp. loops with operators), the intersection of one of the term sets found in this paper with the term sets found earlier for loops (resp. for loops with operators) by Bruck (resp. by Higgins) forms the major part of both sets.

\noindent{\bf Key words and phrases}: semi-abelian variety, BIT speciale variety, ideal term, ideal, protomodular variety, 0-coherent variety.

\noindent{\bf 2020  Mathematics Subject Classification}: 08B05, 18E13, 08A30, 18A20.
\end{abstract}

% 1.
\section{Introduction}

As is well-known, for some particular pointed varieties of universal algebras, there are finite sets of terms $t_i(x_1,x_2,...,x_{m_{i}},y_1,y_2,...y_{n_{i}})$ such that normal subalgebras (i.e. kernels of homomorphisms) can be characterized as follows:\vskip+2mm

(*) a non-empty subset $H$ of an algebra $A$ is a normal subalgebra if and only if it is closed under all terms $t_i$ for arbitrary fixed values of the variables $x_1,x_2,...,x_{m_{i}}$, i.e. for any $i$, any $a_1,a_2,...,a_{m_{i}}\in A$, and any $h_1,h_2,...,h_{n_{i}}\in H$, one has $t_i(a_1,a_2,...,a_{m_{i}},h_1,h_2,...,h_{n_{i}})\in H$.\vskip+2mm

\noindent For instance, for groups such a set consists of the terms  $y_1y_2^{-1}$, $xyx^{-1}$, for rings it consists of the terms $y_1-y_2$, $xy$ and $yx$. In the present paper we show that a finite set of terms $t_i$ satisfying the condition (*) exists for any finitary semi-abelian variety. In fact, we deal with more general case of a finitary BIT speciale variety, and in place of normal subalgebras consider ideals (which are precisely normal subalgebras in the semi-abelian case).

The notion of a semi-abelian variety, and more generally, of a semi-abelian category was introduced by Janelidze, M\'arki and Tholen in 2002 \cite{JMT}. The investigations of a number of mathematicians  on the problem  "to find a list of axioms which reflect the properties of groups, rings and algebras as nicely as the abelian-category axioms do for abelian groups and modules \cite{JMT}" posed by MacLane \cite{M} in 1950 led to this notion. One of them was Bourn who introduced the notion of a protomodular category \cite{Bo}. If a category is Barr exact (like a variety of universal algebras) and has a zero object, then the protomodularity is equivalent to the split version of the Short Five Lemma \cite{Bo}. A semi-abelian category is defined as a Barr exact protomodular category with a zero object and finite coproducts. The notion of a semi-abelian category provides a good abstract foundation for the isomorphism and decomposition theorems, radical and commutator theory, homology of non-abelian structures. Any semi-abelian category (in particular, any semi-abelian variety) is a Mal'cev category.

Another class of Mal'cev varieties is that of BIT speciale (or classically ideal-determined) varieties. The notion of such a variety was introduced by Ursini at the beginning of 70's of the past century \cite{U1}, \cite{U2}, though the identities included in the definition of a BIT speciale variety (see (1.1) and (1.2) below) for the case where $n=1$ were considered earlier by Slomi\'nski \cite{S}. A BIT speciale variety is defined as a variety the algebraic theory of which contains a constant $0$ and, for some  natural $n$, binary terms $\alpha_{1}$,
$\alpha_{2}$,..., $\alpha_{n}$, and an $(n+1)$-ary term $\theta$ such that the following identities are satisfied:
\begin{equation}
\alpha_{i}(x,x)=0,
\end{equation}
\begin{equation}
\theta(\alpha_{1}(x,y),\alpha_{2}(x,y),...,\alpha_{n}(x,y),y)=x.
\end{equation} 

\noindent The equivalent notion is that of a $0$-coherent variety introduced by Beutler at the end of 70's \cite{B}.

The term \textit{BIT speciale variety} is of Italian origin. BIT is an abbreviation for "buona"+"ideali"+"teoria", or in sentence "buona teoria degli ideali", and is translated as a good theory of ideals, the precise meaning of which is explained in the Ursini's paper \cite{U2} (and Section 2 of this paper). 

The notion of an ideal also was introduced by Ursini \cite{U}, and took its roots from the earlier works by Higgins \cite{Hi}, Slomi\'nski \cite{S}, Magari \cite{Ma}, etc. (for details we refer the reader to \cite{GU} and \cite{JMT1}). Ideals were deeply studied by Gumm and Ursini in \cite{GU}.

Although the notion of a BIT speciale variety was introduced for purely universal-algebraic purposes, subsequently it turned out to be closely related to the notions of a protomodular category and of a semi-abelian category since, as it was proved by Bourn and Janelidze in \cite{BJ}, a variety is semi-abelian if and only if it is a pointed BIT speciale variety, (i.e. a BIT speciale variety such that $0$ is a unique constant in its algebraic theory). 

The varieties of groups, rings, loops, semi-loops, divisible involutory groupoids,  Heyting semi-lattices are semi-abelian, and hence BIT speciale. More examples of varieties of the latter kind are given by the varieties of groups with multiple operators, Boolean algebras, Heyting algebras.   

In this paper, for any BIT speciale variety, we give several term sets satisfying the condition (*), where "normal subalgebra" is replaced by "ideal". The terms in them are represented via the BIT special terms $0$, $\theta$, $\alpha_i$, and operations from the signature of the variety. Each of these sets is finite if the variety is finitary. For the varieties of groups and rings one of these sets contains the  typical terms. Namely, for groups, it consists of the terms $y_1y_2$ , $xyx^{-1}$, $x^{-1}y^{-1}x$ (and $y$, which, of course, can be ignored), while for the variety of rings it consists of the terms $y_1+y_2$, $-y$, $xy$ and $yx$. For each of the following varieties -- groups with multiple operators, semi-loops, and divisible involutory groupoids -- one of the term sets found in this paper almost coincides with the term sets given by resp. Higgins \cite{Hi}, B\v{e}lohl\'{a}vek and Chajda \cite{BC}, and Hala\v{s} \cite{H} for these particular varieties. The coincidence is precise in the case of the variety of divisible involutory groupoids. For loops (resp. loops with operators), the intersection of one of term sets found in this paper with the term sets given by Bruck for loops \cite{Br} (by Higgins \cite{Hi} for loops with operators) forms the major part of both sets.

Finally note that, for the semi-abelian case, there is a different characterization of ideals (=normal subalgebras). It is given by Mantovani and Metere \cite{MM}, and  by Hartl and Loiseau \cite{HL}. It is formulated in terms of commutators in abstract categories and works for arbitrary semi-abelian categories. In this relation we have to mention also the paper \cite{JMT1} by Janelidze, M\'arki, and Ursini, where the relationship between the basic constructions of semi-abelian category theory and the theory of ideals (and of clots) in universal algebra is studied.

\section{Ideal terms and ideals}

We begin with the definitions from the papers \cite{U} and \cite{GU}.

Let $\mathbb{V}$ be a variety of universal algebras with a signature $\cF$, and let $0$ be a constant in the algebraic theory of $\cF$. 

A term $t(x_1,x_2,...,x_m,y_1,y_2,...,y_n)$ over $\cF$ is called a 0-ideal in the variables $y_1,y_2,...,y_n$ if  
$$t(x_1,x_2,...,x_m,0,0,...,0)=0$$
 \noindent is an identity in $\mathbb{V}$. 
%We call variables $y_1,y_2,...,y_n$ ideal in this paper.

A non-empty subset $H$ of a $\mathbb{V}$-algebra $A$ is called a 0-ideal if, for any 0-ideal term $t(x_1,x_2,...,x_m,y_1,y_2,...,y_n)$ in the variables $y_1,y_2,...,y_n$, any $a_1,a_2,...,a_m\in A$ and any $b_1,b_2,...,b_n\in H$, one has $$t(a_1,a_2,...,a_m,b_1,b_2,...,b_n)\in H.$$

When no confusion might arise, we omit the prefix "0-".

One can easily verify that the kernel of any congruence on $A$ (i.e. the equivalence class containing $0$) is an ideal. 

A variety is called a BIT variety or ideal-determined if any ideal of its any algebra is the kernel of precisely one congruence. 

For the definitions of semi-abelian and  BIT speciale varieties we refer the reader to the Introduction. As it was mentioned there, a variety is semi-abelian if and only if it is BIT speciale and $0$ is the unique constant in its algebraic theory \cite{BJ}. The latter condition is equivalent to the one that the subset $\lbrace 0 \rbrace$ is a subalgebra in any algebra from this variety. In a semi-abelian variety ideals are precisely normal subalgebras (i.e. kernels of homomorphisms).

Any BIT speciale variety is a BIT variety, but the converse is not true \cite{U1}. The characterizations of the congruence corresponding to an ideal in a BIT speciale variety are given by the following observation (see Proposition 2.1 below). Before we give it, let us agree that, as usual, for a $\mathbb{V}$-algebra $A$, its subset $H$, a term $t(x_1,x_2,...,x_m,y_1,y_2,...,$ $y_n)$ and  $a_1,a_2,...,a_m\in A$, the symbol $t(a_1,a_2,...,a_m,H,H,...,H)$ denotes the set $\lbrace t(a_1,a_2,...,a_m,h_1,h_2,...,h_n)\vert h_1,h_2,...,h_n\in H\rbrace$.\vskip2mm
\begin{prop} (see e.g. \cite{B})
Let $\mathbb{V}$ be a BIT speciale variety, $A$ be its algebra, and $H$ be an ideal of $A$. Let $\simeq_H$ be the congruence whose kernel is $H$, and $a,b\in A$. Then the following conditions are equivalent:\vskip+2mm

(i) $a\simeq_H b$;\vskip+1mm

(ii) $\alpha_i(a,b)\in H$, for any $i$ ($1\leq i\leq n)$;\vskip+1mm

(iii) $a\in \theta(H,H,...,H,b)$;\vskip+1mm

(iv) $b\in \theta(H,H,...,H,a)$.

\end{prop}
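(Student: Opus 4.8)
The plan is to establish the cyclic chain of implications
$$(i)\Rightarrow(ii)\Rightarrow(iii)\Rightarrow(i)$$
and then obtain $(iv)$ by the symmetric argument that produces $(iii)$, i.e.\ by swapping the roles of $a$ and $b$. The whole proof should rest on the two defining BIT speciale identities (1.1) and (1.2) together with the single structural fact, recalled in Section~2, that the kernel $H$ of the congruence $\simeq_H$ is precisely the $\simeq_H$-class of the constant $0$, i.e.\ $h\in H$ if and only if $h\simeq_H 0$.

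\emph{First} I would prove $(i)\Rightarrow(ii)$. Assuming $a\simeq_H b$, compatibility of the congruence with the binary term $\alpha_i$ gives $\alpha_i(a,b)\simeq_H\alpha_i(b,b)$, and by identity (1.1) the right-hand side equals $0$, so $\alpha_i(a,b)\simeq_H 0$, which by the kernel description means $\alpha_i(a,b)\in H$ for every $i$. \emph{Next}, for $(ii)\Rightarrow(iii)$, I would feed the elements $\alpha_i(a,b)$ into identity (1.2): since $\theta(\alpha_1(a,b),\dots,\alpha_n(a,b),b)=a$ holds identically, and each $\alpha_i(a,b)$ lies in $H$ by hypothesis, the element $a$ is visibly of the form $\theta(h_1,\dots,h_n,b)$ with all $h_i\in H$; this is exactly the assertion $a\in\theta(H,H,\dots,H,b)$. \emph{Then}, for $(iii)\Rightarrow(i)$, suppose $a=\theta(h_1,\dots,h_n,b)$ with each $h_i\in H$, so $h_i\simeq_H 0$. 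Applying $\theta$ to the $n$ relations $h_i\simeq_H 0$ and the trivial relation $b\simeq_H b$ yields $a=\theta(h_1,\dots,h_n,b)\simeq_H\theta(0,\dots,0,b)$, and the right-hand term should reduce, via the BIT speciale setup, to $b$, giving $a\simeq_H b$.

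\emph{The main obstacle} I anticipate is the last step: closing the cycle requires knowing that $\theta(0,\dots,0,b)=b$, and this identity is \emph{not} one of (1.1)--(1.2) directly. It must be derived. The natural route is to substitute $x=y=b$ into (1.2): then $\alpha_i(b,b)=0$ by (1.1) for each $i$, so (1.2) reads $\theta(0,0,\dots,0,b)=b$, which is exactly what is needed. I would make sure this derived identity is stated cleanly before using it, since it is the linchpin that makes $(iii)$ and $(iv)$ genuine reformulations of $a\simeq_H b$ rather than mere consequences.

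\emph{Finally}, to obtain $(iv)$, I would observe that conditions $(i)$ and $(ii)$ are not manifestly symmetric in $a$ and $b$ (the term $\alpha_i$ need not be symmetric), so $(iv)$ does not follow formally from $(iii)$ by a bare interchange of symbols. Instead I would run the equivalence $(i)\Leftrightarrow(iii)$ with $a$ and $b$ exchanged: since $\simeq_H$ is symmetric, $a\simeq_H b$ is equivalent to $b\simeq_H a$, and the already-proved equivalence $(i)\Leftrightarrow(iii)$ applied to the pair $(b,a)$ gives $b\simeq_H a \iff b\in\theta(H,\dots,H,a)$. Chaining these yields $(i)\Leftrightarrow(iv)$, completing the four-way equivalence.
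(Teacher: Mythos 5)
Your proof is correct. Note that the paper itself gives no proof of this proposition at all --- it is stated with the citation ``(see e.g.\ \cite{B})'' and deferred to Beutler --- so there is no in-paper argument to compare against; yours is the natural one: congruence compatibility with the term operations $\alpha_i$ and $\theta$, the identification of the kernel $H$ with the $\simeq_H$-class of $0$, and the derived identity $\theta(0,\dots,0,b)=b$. That last identity, which you correctly single out as the linchpin and derive by substituting $x=y=b$ into (1.2) and using (1.1), is precisely the paper's identity (2.2), which the author also records separately and uses repeatedly; your handling of (iv) via the symmetry of $\simeq_H$ rather than a naive symbol swap is also right, since $\alpha_i$ need not be symmetric.
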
\vskip+2mm

Below we will characterize ideals in a BIT speciale variety. From now on we will assume that $\mathbb{V}$ is such a variety. \vskip+2mm

Proposition 2.1 immediately implies that for any $\mathbb{V}$-algebra $A$, its any ideal $H$, any $a\in A$, and any $i$ with $1\leq i\leq n$, we have the inclusion
\begin{equation}
\alpha_i(\theta(H,H,...,H,a),\theta(H,H,...,H,a))\subseteq H.
\end{equation}\vskip+2mm

Observe that, for a subset $H$ of an algebra $A$, the validity of inclusion (2.1) for any $a\in A$ and any $i$ with $1\leq i\leq n$ does not imply that $H$ is an ideal (for a counterexample take the variety of groups).

\vskip+4mm

Let $H$ be a non-empty subset of a $\mathbb{V}$-algebra $A$. In view of Proposition 2.1 we introduce the following equivalence relation on $A$:
\begin{center}
$a\sim_H b$ iff $\theta(H,H,...,H,a)=\theta(H,H,...,H,b)$.
\end{center}\vskip
+2mm

In the sequel we will repeatedly use the identity
\begin{equation}
\theta(0,0,...,0,a)=a.
\end{equation}
\noindent It immediately follows from (1.1) and (1.2).
\vskip+2mm

\begin{lem} Let $0\in H$. The equivalence class  $\left[  a\right] $ of any element $a$ of $A$ is contained in the set $\theta(H,H,...,H,a)$.
\end{lem}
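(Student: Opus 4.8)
The plan is to unwind the definition of the equivalence relation $\sim_H$ and to exploit the hypothesis $0\in H$ through identity (2.2). I would fix an arbitrary element $b$ of the class $[a]$; by definition of $\sim_H$ this means $b\sim_H a$, i.e. $\theta(H,H,\dots,H,b)=\theta(H,H,\dots,H,a)$. The goal then reduces to showing that $b$ itself belongs to the right-hand set, so it suffices to establish the membership $b\in\theta(H,H,\dots,H,b)$ and invoke this equality of sets.

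The key observation is that identity (2.2), namely $\theta(0,0,\dots,0,a)=a$, written for $b$ in place of $a$, gives $\theta(0,0,\dots,0,b)=b$. Since $0\in H$ by hypothesis, the constant tuple $(0,0,\dots,0)$ is an admissible choice of arguments from $H$, so the element $b=\theta(0,0,\dots,0,b)$ is of the form $\theta(h_1,\dots,h_n,b)$ with every $h_i\in H$. By the meaning of the set notation agreed upon before Proposition 2.1, this is precisely the statement $b\in\theta(H,H,\dots,H,b)$.

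Combining the two steps yields $b\in\theta(H,H,\dots,H,b)=\theta(H,H,\dots,H,a)$, which is exactly what is required. Since $b$ was an arbitrary element of $[a]$, the desired inclusion $[a]\subseteq\theta(H,H,\dots,H,a)$ follows at once. I do not anticipate a genuine obstacle here: the entire argument rests on the single observation that the hypothesis $0\in H$ allows identity (2.2) to place each element into its own $\theta$-image set over $H$, after which the defining equality of $\sim_H$ simply transports the membership from $b$ to $a$.
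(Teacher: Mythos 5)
Your argument is correct and is essentially identical to the paper's own proof: both take an arbitrary $b\sim_H a$, use $0\in H$ together with identity (2.2) to get $b=\theta(0,0,\dots,0,b)\in\theta(H,H,\dots,H,b)$, and then transport the membership across the defining equality $\theta(H,H,\dots,H,b)=\theta(H,H,\dots,H,a)$. Nothing further is needed.
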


\begin{proof}
Assume that $b$ is an element of $A$ with $a\sim_H b$. Since $b=\theta(0,0,...,0,b)\in \theta(H,H,...,H,b)$, we obtain that $b\in \theta(H,H,...,H,a)$.
\end{proof}

\begin{lem}
Let $H$ contains $0$ and is closed under the operations $\theta$ and $\alpha_i(-,0)$, for all $i$. Then 
\begin{equation}
\theta(H,H,...,H,0)=H.
\end{equation}
\end{lem}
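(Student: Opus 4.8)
The plan is to prove the set equality $\theta(H,H,\ldots,H,0)=H$ by establishing the two inclusions separately; neither requires any computation beyond specializing the defining identities handed to us.

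For the inclusion $\theta(H,H,\ldots,H,0)\subseteq H$, I would take an arbitrary element of the left-hand side, that is, one of the form $\theta(h_1,h_2,\ldots,h_n,0)$ with $h_1,\ldots,h_n\in H$. Since $0\in H$ by hypothesis, all $n+1$ arguments of $\theta$ here lie in $H$, so closure of $H$ under the operation $\theta$ immediately yields $\theta(h_1,\ldots,h_n,0)\in H$. This direction consumes only the assumptions $0\in H$ and closure under $\theta$.

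For the reverse inclusion $H\subseteq\theta(H,H,\ldots,H,0)$, the key observation is that identity (1.2), specialized at $y=0$, already exhibits every element in the desired form. Indeed, for any $h\in H$, substituting $x=h$ and $y=0$ into (1.2) gives $\theta(\alpha_1(h,0),\alpha_2(h,0),\ldots,\alpha_n(h,0),0)=h$. Because $H$ is closed under each operation $\alpha_i(-,0)$, every argument $\alpha_i(h,0)$ lies in $H$, so $h=\theta(\alpha_1(h,0),\ldots,\alpha_n(h,0),0)$ belongs to $\theta(H,H,\ldots,H,0)$. Combining the two inclusions gives the claim.

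The only nonobvious step is recognizing that the representation forcing $h$ into $\theta(H,\ldots,H,0)$ comes from reading identity (1.2) with its second variable set to $0$; this is exactly where the hypothesis of closure under the operations $\alpha_i(-,0)$ is used. I do not expect a genuine obstacle here, since both the required identity and the two closure conditions are available directly, and the proof amounts to matching each hypothesis to one half of the set equality.
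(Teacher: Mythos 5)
Your proof is correct and follows essentially the same route as the paper: the key step, writing $h=\theta(\alpha_1(h,0),\ldots,\alpha_n(h,0),0)$ via identity (1.2) specialized at $y=0$, is exactly the observation the paper's proof consists of, while the easy inclusion $\theta(H,\ldots,H,0)\subseteq H$ (from $0\in H$ and closure under $\theta$) is left implicit there. You have merely spelled out both directions explicitly, which is fine.
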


\begin{proof} It suffices to observe that, for any $h\in H$ we have
$$h=\theta(\alpha_1(h,0),\alpha_2(h,0),...,\alpha_n(h,0),0).$$
\end{proof}

\begin{lem}
Let $H$ be a non-empty subset of $A$. The following conditions are equivalent and imply that $0\in H$:\vskip+1mm

(i) for any $a\in A$ and any $i$ with $1\leq i\leq n$, inclusion (2.1) holds;\vskip+1mm

(ii) for any $a\in A$ we have the equality
\begin{equation}
\left[  a\right] =\theta(H,H,...,H,a),
\end{equation}
and, moreover, for any $a,b\in A$, if $a\sim_H b$, then $\alpha_i(a,b)\in H$, for any $i$ $(1\leq i\leq n)$.
\end{lem}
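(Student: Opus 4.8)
The plan is to prove the two implications separately, having first observed that each of (i) and (ii) forces $0\in H$. For (i), pick any $h\in H$ (possible since $H\neq\emptyset$) and any $a\in A$; then $\theta(h,h,\ldots,h,a)\in\theta(H,H,\ldots,H,a)$, so applying inclusion (2.1) to this element paired with itself and invoking the identity $\alpha_i(x,x)=0$ from (1.1) yields $0\in H$. For (ii), reflexivity of $\sim_H$ gives $a\sim_H a$, whence the second clause of (ii) together with (1.1) again produces $0=\alpha_i(a,a)\in H$. Both arguments use only (1.1) and the non-emptiness of $H$.

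The implication (ii)$\Rightarrow$(i) is then short. Fix $a$ and take arbitrary $u,v\in\theta(H,H,\ldots,H,a)$. By the equality (2.4) in (ii) we have $\theta(H,H,\ldots,H,a)=\left[a\right]$, so $u$ and $v$ lie in the single $\sim_H$-class $\left[a\right]$; by symmetry and transitivity $u\sim_H v$, and the second clause of (ii) delivers $\alpha_i(u,v)\in H$ for every $i$. Since $u,v$ were arbitrary, this is exactly inclusion (2.1), i.e. (i).

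For (i)$\Rightarrow$(ii) I would first dispatch the second clause: if $a\sim_H b$ then, since $0\in H$ and $\theta(0,0,\ldots,0,a)=a$ by (2.2), both $a$ and $b$ lie in $\theta(H,H,\ldots,H,a)=\theta(H,H,\ldots,H,b)$, so (2.1) applied at $a$ gives $\alpha_i(a,b)\in H$. The inclusion $\left[a\right]\subseteq\theta(H,H,\ldots,H,a)$ is already available (it holds as soon as $0\in H$, by the preceding lemma), so the remaining and genuinely hard point is the reverse inclusion $\theta(H,H,\ldots,H,a)\subseteq\left[a\right]$, i.e. that every $b=\theta(h_1,\ldots,h_n,a)$ satisfies $b\sim_H a$.

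The obstacle here is that proving $\theta(H,\ldots,H,b)\subseteq\theta(H,\ldots,H,a)$ amounts to collapsing a nested term $\theta(k_1,\ldots,k_n,\theta(h_1,\ldots,h_n,a))$ into a single application of $\theta$ over $a$, and a naive appeal to (2.1) is circular. I would therefore isolate the idempotency statement $\theta(H,\ldots,H,\theta(H,\ldots,H,a))=\theta(H,\ldots,H,a)$ and prove its nontrivial inclusion by a double use of (2.1): writing $d=\theta(k_1,\ldots,k_n,w)$ with $w=\theta(h_1,\ldots,h_n,a)$, first apply (2.1) at level $a$ (where $a,w\in\theta(H,\ldots,H,a)$) to get $\alpha_i(a,w)\in H$, hence $a=\theta(\alpha_1(a,w),\ldots,\alpha_n(a,w),w)\in\theta(H,\ldots,H,w)$ by (1.2); then, since also $d\in\theta(H,\ldots,H,w)$, apply (2.1) at level $w$ to obtain $\alpha_i(d,a)\in H$, and conclude $d\in\theta(H,\ldots,H,a)$ by (1.2) once more. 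Granting this idempotency, $b\in\theta(H,\ldots,H,a)$ gives $\theta(H,\ldots,H,b)\subseteq\theta(H,\ldots,H,a)$, while the opposite inclusion follows directly from a single application of (2.1) at $a$ (to any $c\in\theta(H,\ldots,H,a)$ and $b$) together with (1.2). Hence $b\sim_H a$, establishing (2.4) and with it (ii).
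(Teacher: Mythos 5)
Your proof is correct and takes essentially the same route as the paper's: the same derivation of $0\in H$ from non-emptiness and (1.1), the same easy direction (ii)$\Rightarrow$(i), and for (i)$\Rightarrow$(ii) the same double application of inclusion (2.1) combined with identity (1.2) to collapse the nested term $\theta(k_1,\ldots,k_n,\theta(h_1,\ldots,h_n,a))$, with Lemma 2.2 supplying $[a]\subseteq\theta(H,\ldots,H,a)$ and a single application of (2.1) giving the remaining inclusion. The only cosmetic differences are that you package the hard inclusion as an idempotency statement and verify the second clause of (ii) directly from the definition of $\sim_H$ rather than via the already-established equality (2.4).
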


\begin{proof} (i)$\Rightarrow$(ii): Since $H$ is not empty, inclusion (2.1) implies that $0\in H$.
Consider now an element
\begin{equation} 
b=\theta(h_1,h_2,...,h_n,a),
\end{equation}
\noindent with $h_1,h_2,...,h_n\in H$. We will show that 
\begin{equation}
\theta(H,H,...,H,a)=\theta(H,H,...,H,b).
\end{equation}
Consider any 
 $$c=\theta(h'_1,h'_2,...,h'_n,\theta(h_1,h_2,...,h_n,a))$$ 
 \noindent with $h'_1,h'_2,...,h'_n\in H$. Equality (1.2) implies that
$$\alpha_i(\theta(h'_1,h'_2,...,h'_n,\theta(h_1,h_2,...,h_n,a)),a)=$$
$$\alpha_i(\theta(h'_1,h'_2,...,h'_n,\theta(h_1,h_2,...,h_n,a)),$$
$$\theta(\alpha_1(a,\theta(h_1,h_2,...,h_n,a)),\alpha_2(a,\theta(h_1,h_2,...,h_n,a)),...$$
$$\alpha_n(a,\theta(h_1,h_2,...,h_n,a)), \theta(h_1,h_2,...,h_n,a))).$$
By (2.1), we have $\alpha_j(a,\theta(h_1,h_2,...,h_n,a))\in H$, for all $j$. Applying (2.1) again, we obtain that $\alpha_i(c,a)\in H$. Then from (1.2) we obtain that
$$c=\theta(\alpha_1(c,a),\alpha_2(c,a),...,\alpha_n(c,a),a)\in \theta(H,H,...,H,a).$$

\noindent This implies that 
$\theta(H,H,...,H,b)\subseteq \theta(H,H,...,H,a)$.

For the converse, consider again arbitrary $h'_1,h'_2,...,h'_n\in H$. We have
\begin{equation}
\theta(h'_1,h'_2,...,h'_n,a)=\theta(c_1,c_2,...,c_n,\theta(h_1,h_2,...,h_n,a)),
\end{equation}
\noindent where, for any $j$,  
$$c_j=\alpha_j(\theta(h'_1,h'_2,...,h'_n,a),\theta(h_1,h_2,...,h_n,a)).$$  
Inclusion (2.1) implies that $c_j\in H$. This together with (2.5) and (2.7) implies (2.6). Taking into account Lemma 2.2 we can conclude that equality (2.4) holds. 

If $a\sim_H b$, then $b=\theta(h_1,h_2,...,h_n,a)$, for some $h_1,h_2,...,h_n\in H$. Applying (2.1), we obtain that $\alpha_i(a,b)\in H$.

 The implication (ii)$\Rightarrow$(i) is obvious.

\end{proof}

\begin{theo}
Let $\mathbb{V}$ be a BIT speciale variety, and let $A$ be its any algebra. Then, for any non-empty subset $H$ of $A$, the following conditions are equivalent:

\vskip+2mm
(i) $H$ is an ideal;\vskip+1mm

(ii) $H$ is  closed under the operations $\theta$ and $\alpha_i$, for any $i$  with $1\leqslant i\leqslant n$, and, moreover, for any $\tau \in \cF\cup \lbrace \theta \rbrace$, any $i$  with $1\leqslant i\leqslant n$, and any $a_1,a_2,...,a_{k}\in A$ (with $k$ being the arity of $\tau$) we have:
\begin{equation}
\alpha_i(\tau(\theta(H,H,...,H,a_1),\theta(H,H,...,H,a_2),
\end{equation}
$$...,\theta(H,H,...,H,a_{k})),\tau(a_1,a_2,...,a_{k}))\subseteq H.$$\vskip+1mm

(iii)  $H$ contains $0$ and is  closed under the operations $\theta$ and $\alpha_i(-,0)$, for any $i$ with $1\leqslant i\leqslant n$, and, moreover, for any $\tau \in \cF\cup \lbrace \theta \rbrace \cup \lbrace \alpha_j\vert 1\leqslant j\leqslant n\rbrace$, any $i$ with $1\leqslant i\leqslant n$, and any $a_1,a_2,...,a_{k}\in A$ (with $k$ being the arity of $\tau$) we have (2.8);\vskip+1mm

(iv) $H$ is closed under the operations $\theta$ and $\alpha_i(-,0)$, for any $i$ with $1\leqslant i\leqslant n$, and, moreover, for any $\tau\in \cF$,  any $a_1,a_2,...,a_{k}\in A$ (with $k$ being the arity of $\tau$) and $i$ with $1\leqslant i\leqslant n$, we have (2.8) and
%\alpha_i(\theta(H,H,...,H,\theta(H,H,...,H,a)),a)\subseteq H,
%\end{equation}
\begin{equation}
\alpha_i(\theta(H,H,...,H,a),\theta(H,H,...,H,a))\subseteq H;
\end{equation}
\vskip+2mm

(v) $H$ is closed under the operations $\theta$ and $\alpha_i(-,0)$, for any $i$ with $1\leqslant i\leqslant n$, and, moreover, for any $\tau\in \cF$,  any $a_1,a_2,...,a_{k}\in A$ (with $k$ being the arity of $\tau$) and $i$, $j$ with $1\leqslant i\leqslant n, 1\leq j\leq k$, we have (2.9) and

\begin{equation}
\alpha_i(\tau(a_1, a_2,....,a_{j-1},\theta(H,H,...,H,a_j),a_{j+1},...
\end{equation}
$$...,a_{k}),\tau(a_1,a_2,...,a_{k}))\subseteq H;$$\vskip+1mm

(vi) $H$ contains $0$ and is closed under the operations $\theta$ and $\alpha_i(-,0)$, for any $i$ with $1\leqslant i\leqslant n$, and, moreover, for any $a\in A$ we have 
\begin{equation}
\left[  a\right] =\theta(H,H,...,H,a),
\end{equation} 
\noindent and moreover, for any $\tau\in \cF$,  any $a_1,a_2,...,a_{k}\in A$ (with $k$ being the arity of $\tau$) and $i$, $j$ with $1\leqslant i\leqslant n, 1\leq j\leq k$, we have inclusion (2.10);\vskip+1mm

(vii) $\sim_H$ is a congruence on $A$, and $H$ is its kernel.\vskip+2mm

If $\mathbb{V}$ is semi-abelian, then "$H$ is closed under the operations $\theta$ and $\alpha_i$, for any $i$ with $1\leqslant i\leqslant n$" in condition (ii), "$H$ contains $0$ and is closed under the operations $\theta$ and $\alpha_i(-,0)$, for any $i$ with  $1\leqslant i\leqslant n$" in condition (iii), "$H$ is closed under the operations $\theta$ and $\alpha_i(-,0)$, for any $i$ with $1\leqslant i\leqslant n$" in conditions (iv) and (v) can be replaced by "$H$ is a subalgebra of $A$".

\end{theo}

\begin{proof}

The implications (i)$\Rightarrow$(ii) and (i)$\Rightarrow$(iii) immediately follow from the definition of an ideal.

(ii)$\Rightarrow$(iv): Since $H$ is not empty and is closed under $\alpha_i$, it contains $0$. Moreover, inclusion (2.8) considered for $\tau=\theta$ implies that, for any $h_1,h_2,...,h_n,h'_1,h'_2,...,h'_n\in H$, we have:
$$\alpha_i(\theta(\theta(\alpha_1(h_1,h'_1),\alpha_2(h_1,h'_1),...,\alpha_n(h_1,h'_1),h'_1),$$
$$\theta(\alpha_1(h_2,h'_2),\alpha_2(h_2,h'_2),...,\alpha_n(h_2,h'_2),h'_2)$$
$$...,\theta(\alpha_1(h_n,h'_n),\alpha_2(h_n,h'_n),...,\alpha_n(h_n,h'_n),h'_n)),$$
$$\theta(0,0,...,0,a),$$
$$\theta(h'_1,h'_2,...,h'_{n},a))\in H.$$\vskip+1mm
\noindent Applying equality (1.2) $n$ times, we obtain that
$$\alpha_i(\theta(h_1,h_2,...,h_n,a),\theta(h'_1,h'_2,...,h'_{n},a))\in H,$$
and inclusion (2.9) is proved. 
\vskip+1mm

(iii)$\Rightarrow$(iv): To show inclusion (2.9) we consider (2.8) for $\tau=\alpha_j$ and $a_1=a_2=a$. Then we obtain 
$$\alpha_i(\alpha_j(\theta(H,H,...,H,a),\theta(H,H,...,H,a)),0)\subseteq H,$$
\noindent for any $i$ and $j$. Equality (1.2) implies that, for any $h_1,h_2,...,h_n,$ $h'_1,h'_2,...,h'_n\in H$, we have
$$\alpha_i(\theta(h_1,h_2,...,h_n,a),\theta(h'_1,h'_2,...,h'_n,a))=$$
$$\theta(\alpha_1(\alpha_i(\theta(h_1,h_2,...,h_n,a),\theta(h'_1,h'_2,...,h'_n,a)),0),$$
$$\alpha_2(\alpha_i(\theta(h_1,h_2,...,h_n,a),\theta(h'_1,h'_2,...,h'_n,a)),0),...,$$
$$\alpha_n(\alpha_i(\theta(h_1,h_2,...,h_n,a),\theta(h'_1,h'_2,...,h'_n,a)),0),0,).$$
\noindent Therefore, this element belongs to $H$.

The implication (iv)$\Rightarrow$(v) follows from equality (2.2), while (v)$\Rightarrow$(vi) follows from Lemma 2.4.

(vi)$\Rightarrow$(vii): Let us show that for any $\tau \in \cF$ and any $j$, we have 
$$\tau(a_1 , a_2 ,...,a_{j-1},\left[  a_{j}\right],a_{j+1}...a_k )\subseteq$$ 
\begin{equation}
\left[ \tau(a_1,a_2,...,a_{j-1},a_{j},a_{j+1},...,a_{k})\right].
\end{equation}
\noindent Inclusion (2.10) and equality (2.11) imply that, for any $$b\in \tau(a_1,a_2 ,...,a_{j-1},\left[  a_{j})\right],a_{j+1},...,a_k )=$$
$$\tau(a_1,a_2,...,a_{j-1},\theta(H,H,...,H,a_j),a_{j+1},...,a_{k}),$$
\noindent and any $i$ we have
\begin{equation} 
\alpha_i(b,\tau(a_1,a_2,...,a_{k}))\in H.
\end{equation}
\noindent Hence
$$b=\theta(\alpha_1(b,\tau(a_1,a_2,...,a_{k})),\alpha_2(b,\tau(a_1,a_2,...,a_{k})),$$
$$...,\alpha_n(b,\tau(a_1,a_2,...,a_{k})), \tau(a_1,a_2,...,a_{k}))\in$$ 
$$\theta(H,H,...,H,\tau(a_1,a_2,...,a_{k}))=$$
$$\left[ \tau(a_1,a_2,...,a_{k})\right],$$
\noindent and inclusion (2.12) is proved. Thus $\sim_H$ is a congruence. Lemma 2.3 implies that $H$ is the kernel of the congruence $\sim_H$.

The implication (vii)$\Rightarrow$(i) is well-known. 

\end{proof}
Observe that
inclusion (2.9) is satisfied for any subset $H$ of a $\mathbb{V}$-algebra $A$ which is right-cancellable in the sense of \cite{Z1}. Recall this means that for any $a_1,a_2,...,$
$a_n,a'_1,a'_2,...,a'_n,b,b'\in A$ and any $i$ with $1\leqslant i\leqslant n$ one has 
$$\alpha_i(\theta(a_1,a_2,...,a_n,b),\theta(a'_1,a'_2,...,a'_n,b))=$$
$$\alpha_i(\theta(a_1,a_2,...,a_n,b'),\theta(a'_1,a'_2,...,a'_n,b')).$$
\vskip+4mm

We say that a set $T$ of ideal terms determines ideals in a variety $\mathbb{V}$ if "any ideal term" in the definition of an ideal can be replaced by "any ideal term from $T$". Theorem 2.5 implies

\begin{theo}
Let $\mathbb{V}$ be a BIT speciale variety. Then each of the following sets of terms (being ideal terms in the variables $y$ with or without indices) determines ideals in $\mathbb{V}$: 

(i)  the set of the terms \begin{equation}
\theta(y_1,y_2,...,y_n, y_{n+1}),
\alpha_i(y_1,y_2),
\end{equation} 
%$$\alpha_i(y_i,y_2),$$
\noindent and
\begin{equation}
\alpha_i(\tau(\theta(y_{11},y_{12},...,y_{1n},x_1),\theta(y_{21},y_{22},...,y_{2n},x_2),
\end{equation}
$$...,\theta(y_{k1},y_{k2},...,y_{kn},x_{k})),\tau(x_1,x_2,...,x_k)),$$

\noindent for all operations $\tau$ from the set $\cF\cup \lbrace \theta \rbrace$, and all $i$ with $1\leqslant i\leqslant n$; \vskip+2mm

(ii) the set of the terms \begin{equation}
0,
\theta(y_1,y_2,...,y_n, y_{n+1}),
\alpha_i(y,0),
\end{equation} 
%$$\alpha_i(y_i,y_2),$$
\noindent and terms (2.15) for all operations $\tau$ from the set $\cF\cup \lbrace \theta \rbrace \cup \lbrace \alpha_j\vert 1\leqslant j\leqslant n\rbrace$ and all $i$ with $1\leqslant i\leqslant n$; \vskip+2mm

(iii) the set of the terms  \begin{equation}
\theta(y_1,y_2,...,y_n, y_{n+1}),
\alpha_i(y,0),
\end{equation} 
\begin{equation}
\alpha_i(\theta(y_1,y_2,...,y_n,x),\theta(y_{n+1},y_{n+2},...,y_{2n},x)),
\end{equation}
\noindent and terms (2.15) for all operations $\tau$ from the signature $\cF$ and all $i$ with $1\leqslant i\leqslant n$;\vskip+2mm

(iv) the set of the terms (2.18) and 
\begin{equation}
\theta(y_1,y_2,...,y_n, y_{n+1}),
\alpha_i(y,0),
\end{equation} 
\begin{equation}
\alpha_i(\tau(x_1, x_2,....,x_{j-1},\theta(y_1,y_2,...,y_n,x_j),x_{j+1},...
\end{equation}
$$...,x_{k}),\tau(x_1,x_2,...,x_{k}));$$
\noindent for all operations $\tau$ from the signature $\cF$ and all $i$, $j$ with $1\leqslant i\leqslant n, 1\leq j\leq k$;\vskip+2mm

If $\mathbb{V}$ is semi-abelian, then terms (2.14) in (i), (2.16) in (ii), (2.17) in (iii), and (2.19) in (iv) can be replaced by the terms
\begin{equation}
\tau(y_1,y_2,...,y_n),
\end{equation}
\noindent for all operations $\tau$ from the signature $\cF$ (including $0$).
\end{theo}

Observe that if $\cF$ is finite, then obviously all four term sets in Theorem 2.6 also are finite.
\vskip+2mm

Finally let us give several easy corollaries of Theorem 2.5. Consider two varieties $\mathbb{V}$ and $\mathbb{V'}$  with signatures resp. $\cF$ and $\cF'$, and  sets of identities resp. $\Sigma$ and $\Sigma'$. Assume that $\cF\subseteq \cF'$ and $\Sigma\subseteq \Sigma'$. If $\mathbb{V}$ is a BIT speciale variety, then obviously $\mathbb{V'}$ is also of this kind with the same terms $0$, $\theta$, and $\alpha_i$. Theorem 2.5 implies 
  
  \begin{cor}
Let $\mathbb{V}$ be a BIT speciale variety, $A'$ be a $\mathbb{V'}$-algebra, and $H$ be a subset of $A'$. The following conditions are equivalent:\vskip+1mm

(i) $H$ is an ideal of $A'$ in the variety $\mathbb{V'}$;\vskip+1mm

(ii) $H$ is an ideal of $A'$ in the variety $\mathbb{V}$ and inclusion (2.8) is satisfied for any $a_1, a_2,...,a_k\in A'$ and any $\tau\in \cF'\setminus \cF$.\vskip+1mm

(iii) $H$ is an ideal of $A'$ in the variety $\mathbb{V}$ and inclusions (2.10) are satisfied for any $a_1, a_2,...,a_k\in A'$, any $\tau\in \cF'\setminus \cF$, and any $i$, $j$ with $1\leqslant i\leqslant n, 1\leq j\leq k$.
\end{cor}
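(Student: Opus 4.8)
The plan is to reduce everything to two applications of Theorem 2.5, one for each variety, exploiting the fact that $\mathbb{V}$ and $\mathbb{V'}$ share the very same BIT speciale terms. First I would record the standing observations that make the statement meaningful. Since $\cF\subseteq\cF'$ and $\Sigma\subseteq\Sigma'$, the $\cF$-reduct of the $\mathbb{V'}$-algebra $A'$ satisfies $\Sigma$ and hence is a $\mathbb{V}$-algebra; this is what the phrase ``$H$ is an ideal of $A'$ in the variety $\mathbb{V}$'' refers to. Moreover the identities (1.1) and (1.2) lie in $\Sigma$, so they continue to hold in $\mathbb{V'}$, and therefore $0$, $\theta$, $\alpha_1,\dots,\alpha_n$ serve as BIT speciale terms for $\mathbb{V'}$ as well, with the same integer $n$. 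Consequently the conditions of Theorem 2.5 read word for word the same for both varieties, the only difference being the range of the operation $\tau$: it runs over $\cF'\cup\lbrace\theta\rbrace$ (resp.\ $\cF'$) for $\mathbb{V'}$ and over $\cF\cup\lbrace\theta\rbrace$ (resp.\ $\cF$) for $\mathbb{V}$.

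For (i)$\Leftrightarrow$(ii) I would invoke condition (ii) of Theorem 2.5 for $\mathbb{V'}$: the subset $H$ is an ideal of $A'$ in $\mathbb{V'}$ if and only if $H$ is closed under $\theta$ and all $\alpha_i$ and inclusion (2.8) holds for every $\tau\in\cF'\cup\lbrace\theta\rbrace$. Writing $\cF'\cup\lbrace\theta\rbrace$ as the union of $\cF\cup\lbrace\theta\rbrace$ and $\cF'\setminus\cF$, I split this family of inclusions into two parts. The closure conditions together with the inclusions (2.8) for $\tau\in\cF\cup\lbrace\theta\rbrace$ are, by the same condition (ii) of Theorem 2.5 applied now to $\mathbb{V}$, exactly equivalent to ``$H$ is an ideal of $A'$ in $\mathbb{V}$''. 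What remains is precisely the inclusions (2.8) for $\tau\in\cF'\setminus\cF$, which is the extra clause of (ii).

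For (i)$\Leftrightarrow$(iii) the argument is the same but uses condition (v) of Theorem 2.5 in place of (ii): $H$ is an ideal of $A'$ in $\mathbb{V'}$ if and only if $H$ is closed under $\theta$ and $\alpha_i(-,0)$, inclusion (2.9) holds for all $a\in A'$ and all $i$, and inclusion (2.10) holds for all $\tau\in\cF'$ and all admissible $i,j$. Splitting $\cF'=\cF\cup(\cF'\setminus\cF)$, the closure conditions together with (2.9) and with (2.10) for $\tau\in\cF$ constitute, by condition (v) of Theorem 2.5 for $\mathbb{V}$, exactly the statement that $H$ is an ideal in $\mathbb{V}$; here it is worth noting that (2.9) carries no $\tau$ and is therefore absorbed wholesale into the $\mathbb{V}$-ideal condition. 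The leftover inclusions (2.10) for $\tau\in\cF'\setminus\cF$ give (iii). I do not expect any genuine obstacle: the proof is essentially a partition of the operation set followed by two uses of Theorem 2.5. The only point needing a little care is the one flagged at the outset, namely verifying that the BIT terms and the integer $n$ are truly common to both varieties, so that the ``inner'' parts of the Theorem 2.5 conditions (closure, and the $\tau$-free inclusion (2.9)) coincide and can be collapsed into the single clause ``$H$ is an ideal in $\mathbb{V}$''; once this is in hand, both equivalences follow immediately.
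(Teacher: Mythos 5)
Your proof is correct and is essentially the paper's own argument: the paper merely observes that $\mathbb{V'}$ is BIT speciale with the same terms $0$, $\theta$, $\alpha_i$ and then states that Theorem 2.5 implies the corollary, which is precisely your splitting of the operation set $\cF'\cup\lbrace\theta\rbrace$ (resp. $\cF'$) into the part $\cF\cup\lbrace\theta\rbrace$ (resp. $\cF$) handled by condition (ii) (resp. (v)) of Theorem 2.5 for $\mathbb{V}$, plus the leftover operations $\cF'\setminus\cF$. One small imprecision: identities (1.1) and (1.2) need not literally belong to $\Sigma$ (they only hold in $\mathbb{V}$), but since $\Sigma\subseteq\Sigma'$ forces the $\cF$-reduct of every $\mathbb{V'}$-algebra to lie in $\mathbb{V}$, they hold in $\mathbb{V'}$ anyway, so your argument is unaffected.
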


\begin{cor}
Let $\mathbb{V}$ be a BIT speciale variety, and $A'$ be a $\mathbb{V'}$-algebra. Let $\cF=\cF'$. A subset $H$ of $A'$ is an ideal in the variety $\mathbb{V'}$ if and only if it is an ideal of $A'$ in the variety $\mathbb{V}$.
\end{cor}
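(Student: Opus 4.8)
The plan is to obtain the corollary as the degenerate case $\cF=\cF'$ of Corollary 2.7. Before invoking it I would record that the statement is even meaningful: because $\cF=\cF'$ the algebra $A'$ carries precisely the operations of $\mathbb{V}$, and because $\Sigma\subseteq\Sigma'$ it satisfies every identity of $\mathbb{V}$; hence $A'$ is at once a $\mathbb{V}$-algebra and a $\mathbb{V'}$-algebra, and the two phrases ``ideal in $\mathbb{V}$'' and ``ideal in $\mathbb{V'}$'' refer to the same subset $H$ inside the same algebra. I would also recall, as noted just before Corollary 2.7, that $\mathbb{V'}$ is again BIT speciale with the \emph{same} BIT speciale terms $0$, $\theta$, $\alpha_1,\dots,\alpha_n$.

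With this preparation the argument is immediate. Taking $\cF=\cF'$ makes the set $\cF'\setminus\cF$ empty, so in condition (ii) of Corollary 2.7 the supplementary requirement ``inclusion (2.8) holds for every $\tau\in\cF'\setminus\cF$'' is vacuously satisfied, and likewise for the clause involving inclusion (2.10) in condition (iii). Condition (ii) therefore collapses to the single assertion ``$H$ is an ideal of $A'$ in the variety $\mathbb{V}$'', and the equivalence of (i) and (ii) in Corollary 2.7 becomes exactly the equivalence claimed here. A self-contained variant would instead appeal to Theorem 2.6: any one of the determining term sets constructed there, say the set in part (iii), is built solely from $0$, $\theta$, $\alpha_1,\dots,\alpha_n$, and operations $\tau$ ranging over the signature; since these ingredients are shared and $\cF=\cF'$, the term set determining ideals in $\mathbb{V}$ is literally identical to the one determining ideals in $\mathbb{V'}$, so closure of $H$ under it is one and the same condition.

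The point to watch, rather than a genuine obstacle, is the asymmetry of the two implications. Since $\Sigma\subseteq\Sigma'$, every $0$-ideal term of $\mathbb{V}$ is a fortiori a $0$-ideal term of $\mathbb{V'}$ -- an identity $t(\dots,0,\dots,0)=0$ valid over $\Sigma$ remains valid over $\Sigma'$ -- so the family of ideal terms of $\mathbb{V'}$ contains that of $\mathbb{V}$, and the implication ``ideal in $\mathbb{V'}$ implies ideal in $\mathbb{V}$'' is transparent, closure under the larger family forcing closure under the smaller. The reverse implication looks strictly stronger, and this is where the real content lies; its validity is precisely what Theorem 2.6 secures, since finitely many ideal terms depending only on the shared data $0$, $\theta$, $\alpha_1,\dots,\alpha_n$ and on the common signature $\cF=\cF'$ already suffice to detect ideals in either variety, so the apparently richer supply of ideal terms available in $\mathbb{V'}$ contributes nothing new.
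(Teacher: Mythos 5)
Your proposal is correct and matches the paper's intended derivation: Corollary 2.8 is stated there without separate proof precisely because, when $\cF=\cF'$, the set $\cF'\setminus\cF$ is empty and the extra conditions in Corollary 2.7 become vacuous, which is exactly your main argument. Your supplementary remarks (that $\mathbb{V'}$ inherits the same BIT speciale terms, and that the $\mathbb{V'}\Rightarrow\mathbb{V}$ direction is the trivial one) are also consistent with the paper's surrounding discussion.
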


Observe that the condition that $\mathbb{V}$ is BIT speciale is essential in both Corollaries 2.7 and 2.8. Removing this condition we still have that any subset which is an ideal in $\mathbb{V}'$ is an ideal in $\mathbb{V}$ too, but the converse is not true even if $\cF=\cF'$ (for a counterexample take the variety $\mathbb{V}'$ of groups, and the variety $\mathbb{V}$ with the same signature, but the empty set of identities).\vskip+2mm 

Corollaries 2.7 implies
\begin{cor}
Let $\mathbb{V}$ be a BIT speciale variety. 

(a) The union of any set of ideal terms determining ideals of $\mathbb{V}$ with the set of terms (2.15) for $\tau\in \cF'\setminus \cF$ determines ideals in $\mathbb{V'}$.

(b) The union of any set of ideal terms determining ideals of $\mathbb{V}$ with the set of terms (2.20) for $\tau\in \cF'\setminus \cF$ and $1\leqslant i\leqslant n, 1\leq j\leq k$  determines ideals in $\mathbb{V'}$.
\end{cor}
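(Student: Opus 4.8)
The plan is to derive both parts as direct translations of Corollary 2.7, the entire content being a dictionary between ``closure of $H$ under a listed ideal term'' and the set-valued inclusions (2.8) and (2.10). I would fix a set $T$ of ideal terms determining ideals in $\mathbb{V}$, an arbitrary $\mathbb{V'}$-algebra $A'$, and a non-empty subset $H\subseteq A'$. For part (a) I set $T_a$ to be the union of $T$ with the terms (2.15) for $\tau\in\cF'\setminus\cF$; for part (b) I set $T_b$ to be the union of $T$ with the terms (2.20) for $\tau\in\cF'\setminus\cF$ and all admissible $i,j$. In each case the goal is to show that $H$ is closed, in the ideal sense, under every term of the respective set if and only if $H$ is an ideal of $A'$ in $\mathbb{V'}$.

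First I would dispose of two preliminary points that both legitimize the sets and settle the easy direction. Every term of $T$ is a $0$-ideal term of $\mathbb{V}$, hence also of $\mathbb{V'}$, since $\Sigma\subseteq\Sigma'$ forces every identity of $\mathbb{V}$ to hold in $\mathbb{V'}$. The new terms are $0$-ideal terms of $\mathbb{V'}$ too: substituting $0$ for all $y$-variables in (2.15) collapses each inner $\theta(0,\dots,0,x_r)$ to $x_r$ by (2.2), whence (1.1) gives $\alpha_i(\tau(x_1,\dots,x_k),\tau(x_1,\dots,x_k))=0$, and the computation for (2.20) is the same. Consequently any ideal of $A'$ in $\mathbb{V'}$ is automatically closed under all of $T_a$ and of $T_b$, which is the trivial implication in each equivalence.

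The core step is the dictionary. Closure of $H$ under the term (2.15) for a fixed $\tau$ --- that is, membership of its value in $H$ whenever the $y$-variables range over $H$ and the $x$-variables over $A'$ --- is literally inclusion (2.8) for that $\tau$, for all $a_1,\dots,a_k\in A'$, and for all $i$; here I would use that the block $y_{r1},\dots,y_{rn}$ attached to the $r$-th operand ranges independently over $H$ and so reproduces exactly the set-valued factor $\theta(H,\dots,H,a_r)$. In the same way, closure under (2.20) is precisely inclusion (2.10). Granting this, part (a) follows: if $H$ is closed under $T_a$ then it is closed under $T$, so by the defining property of $T$ --- applied to the $\cF$-reduct of $A'$, which lies in $\mathbb{V}$ because $\cF\subseteq\cF'$ and $\Sigma\subseteq\Sigma'$ --- it is an ideal of $A'$ in $\mathbb{V}$, while closure under the $(2.15)$-terms yields (2.8) for all $\tau\in\cF'\setminus\cF$; the implication (ii)$\Rightarrow$(i) of Corollary 2.7 then makes $H$ an ideal in $\mathbb{V'}$, and the converse is (i)$\Rightarrow$(ii) read through the same dictionary. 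Part (b) is word-for-word the same with (2.15) replaced by (2.20), inclusion (2.8) by (2.10), and condition (ii) of Corollary 2.7 by condition (iii).

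I do not anticipate a genuine obstacle: this is a bookkeeping consequence of Corollary 2.7. The one place deserving care is the matching in the previous paragraph --- confirming that letting the blocks of $y$-variables range independently over $H$ recovers the inclusions (2.8) and (2.10) verbatim --- together with the routine check that the $\cF$-reduct of $A'$ really is a $\mathbb{V}$-algebra, so that the hypothesis ``$T$ determines ideals in $\mathbb{V}$'' may legitimately be invoked for $A'$.
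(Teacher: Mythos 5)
Your proposal is correct and follows exactly the route the paper intends: the paper derives this corollary directly from Corollary 2.7, and your argument is precisely that derivation with the details made explicit (the dictionary between closure under terms (2.15)/(2.20) and inclusions (2.8)/(2.10), the fact that the new terms are $0$-ideal terms in $\mathbb{V}'$, and the observation that the $\cF$-reduct of a $\mathbb{V}'$-algebra is a $\mathbb{V}$-algebra). Nothing in your write-up deviates from or adds to what the paper's one-line proof implicitly relies on.
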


\begin{cor}
 If $\cF= \cF'$, any set of ideal terms determining ideals in $\mathbb{V}$ determines ideals in $\mathbb{V'}$ too.
\end{cor}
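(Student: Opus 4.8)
The plan is to obtain this as an immediate consequence of Corollary 2.8 together with the bare definition of ``a set of terms determines ideals'', so that essentially no new computation is needed. First I would record three elementary preliminaries. Since $\cF=\cF'$, every term of the given set $T$ is literally a $\cF'$-term; since $\Sigma\subseteq\Sigma'$, the defining identity $t(x_1,\dots,x_m,0,\dots,0)=0$ that makes each such $t$ an ideal term holds throughout $\mathbb{V}$ and therefore also throughout the subclass $\mathbb{V'}$, so $T$ is genuinely a set of ideal terms of $\mathbb{V'}$ and the claim is meaningful. Moreover, because $\Sigma\subseteq\Sigma'$ over the common signature, every $\mathbb{V'}$-algebra $A'$ is in particular a $\mathbb{V}$-algebra, and---the operations being the very same functions on the very same set---the interpretation in $A'$ of any term of $T$ does not depend on whether $A'$ is viewed in $\mathbb{V}$ or in $\mathbb{V'}$. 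Consequently ``$H$ is closed under $T$'' has one unambiguous meaning.

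Next I would split the equivalence implicit in ``determines ideals'' into its two directions and note that one of them is free. Because an ideal is by definition closed under all ideal terms and $T$ consists of ideal terms of $\mathbb{V'}$, any ideal $H$ of a $\mathbb{V'}$-algebra $A'$ is automatically closed under the terms of $T$. Hence the only substantive assertion is the converse: if a non-empty $H\subseteq A'$ is closed under every term of $T$ (with the parameter variables ranging over $A'$), then $H$ is an ideal of $A'$ in $\mathbb{V'}$.

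To prove this converse I would regard $A'$ as a $\mathbb{V}$-algebra and invoke the hypothesis. Since $T$ determines ideals in $\mathbb{V}$, closure of $H$ under $T$ already guarantees that $H$ is an ideal of $A'$ in the variety $\mathbb{V}$; and Corollary 2.8, whose hypothesis $\cF=\cF'$ is exactly the one assumed here, upgrades this to ``$H$ is an ideal of $A'$ in $\mathbb{V'}$''. Putting the two directions together shows that in the definition of an ideal of a $\mathbb{V'}$-algebra ``any ideal term'' may be replaced by ``any ideal term from $T$'', which is precisely what it means for $T$ to determine ideals in $\mathbb{V'}$.

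I do not anticipate a real obstacle: the entire weight of the argument is borne by Corollary 2.8, and the statement may in fact be viewed as the degenerate case $\cF=\cF'$ of Corollary 2.9(a), in which the adjoined family of terms (2.15) indexed by $\tau\in\cF'\setminus\cF=\varnothing$ is empty. The only point that rewards a little care is the logical bookkeeping of the second paragraph---cleanly isolating the automatic direction from the one that genuinely uses the hypothesis---so that Corollary 2.8 is applied to exactly the right implication.
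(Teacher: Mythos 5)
Your proof is correct and follows exactly the route the paper intends: the paper states this corollary without proof as an immediate consequence of Corollary 2.8 (equivalently, the degenerate case $\cF'\setminus\cF=\varnothing$ of Corollary 2.9(a)), and your argument---the trivial direction from the definition of an ideal term plus Corollary 2.8 for the substantive converse---is precisely that derivation with the routine bookkeeping filled in.
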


\section{Examples}
For all varieties considered in this section, the signature/algebraic theory of which contains the constant $e$, the multiplication operation $\cdot$ and the division operation /, there are the following BIT speciale terms:
\begin{equation}
 0=e, \theta(x,y)=xy, \alpha(x,y)=x/y.
 \end{equation}
 
 Throughout the section we identify the equivalent terms (like, for instance, the terms $y/e$ and $y$ in the variety of groups).\vskip+2mm
  
For the variety of groups, the set (iv) from Theorem 2.6 consists of the terms: 
\begin{equation} 
 y_1y_2, xyx^{-1}, x^{-1}y^{-1}x,
\end{equation}
\noindent and the identity term $y$ which can be ignored (so below we will not mention it).
\vskip+2mm

For the variety of rings the set (iv) from Theorem 2.6 consists of the terms $y_1+y_2$, $-y$, $xy$ and $yx$. 
\vskip+2mm

More generally, consider the variety $\Omega$-$Grp$ of groups with operators from a set $\Omega$ (three criteria for a variety of universal algebras to be of this kind, formulated in BIT speciale terms are given in our recent papers \cite{Z} and \cite{Z1}). The set (iv) from Theorem 2.6 consists of terms (3.2) and 
 \begin{equation}
\omega(x_1,x_2,...,yx_i,...,x_k)(\omega(x_1,x_2,...,x_k))^{-1},
\end{equation}
\noindent for all $\omega\in \Omega$ and all $i$ with $1\leqslant i\leqslant n$. Note that a set of ideal terms determining ideals in the variety $\Omega$-$Grp$ was given by Higgings in \cite{Hi}. It consists of the well-known terms determining ideals of groups (i.e. normal subgroups) and the terms which can be obtained from (3.3) by permuting the factors, i.e. the terms
 \begin{equation}
(\omega(x_1,x_2,...,x_k))^{-1}\omega(x_1,x_2,...,yx_i,...,x_k).
\end{equation}

When $\omega$ is a unary distributive term
(like the multiplication by a scalar in the case of algebras over a ring), term (3.3) takes the form
$\omega(y)$. 
\vskip+2mm

Recall that a loop can be defined as a set $A$ equipped with three binary operations $\cdot$, $/$, and $\setminus$ and a constant $e$ satisfying the identities:
\begin{equation}
x/x=e,
\end{equation}
\begin{equation}
(x/y)y=x,
\end{equation}
\begin{equation}
(xy)/y=x.
\end{equation}
\begin{equation}
y(y\setminus x)=x,
\end{equation}
\begin{equation}
y\setminus (yx)=x,
\end{equation}
\begin{equation}
x\setminus x=e.
\end{equation}
\noindent Hence, for the variety of loops, the set (i) from Theorem 2.6 consists of the terms
\begin{equation}
 y_1y_2, y_1/y_2,
\end{equation}
\noindent 
\begin{equation}
((y_1x_1)(y_2x_2))/(x_1x_2),
\end{equation}
\begin{equation}
((y_1x_1)/(y_2x_2))/(x_1/ x_2).
\end{equation}
\begin{equation}
((y_1x_1)\setminus(y_2x_2))/(x_1\setminus x_2).
\end{equation}
\vskip+1mm
\noindent while the set (iv) of Theorem 2.6 consists of the following terms:
\begin{equation}
(y_1x_1)/(y_2x_2),
\end{equation}
\begin{equation} 
y_1y_2, 
\end{equation} 
\begin{equation}
((y_1x_1)x_2)/(x_1x_2),
\end{equation}
\begin{equation}
(x_1(y_1x_2))/(x_1x_2),
\end{equation}
\begin{equation}
((y_1x_1)/x_2)/(x_1/x_2),
\end{equation}
\begin{equation}
(x_1\setminus (yx_2))/(x_1\setminus x_2).
\end{equation}
\begin{equation}
(x_1/(yx_2))/(x_1/x_2),
\end{equation}
\begin{equation}
((y_1x_1)\setminus x_2)/(x_1\setminus x_2)
\end{equation}

\noindent and the term $y$ which can be ignored, so that, like in the case of groups, we will not count it. Recall that a set of ideal terms determining ideals of loops was found by Bruck in \cite{Br}. It consists of terms (3.16)-(3.20) and the terms 
\begin{equation}
y_1/y_2, y_1\setminus y_2.
\end{equation}\vskip+2mm

Let $\Omega$-$Loop$ be the variety of loops with operators from a set $\Omega$. The set (i) of Theorem 2.6 consists of terms (3.11)-(3.14) and 
\begin{equation}
\omega(y_1x_1,y_2x_2,...,y_kx_k)/\omega(x_1,x_2,...,x_k),
\end{equation}
\noindent for all $\omega\in \Omega$. The set (iv) from Theorem 2.6 consists of terms (3.15)-(3.22) and
 \begin{equation}
\omega(x_1,x_2,...,yx_i,...,x_k)/\omega(x_1,x_2,...,x_k),
\end{equation}
\noindent for all $\omega\in \Omega$ and all $i$ with $1\leq i\leq n$. Another set of terms determining ideals in the variety $\Omega$-$Loop$ was found by Higgins in \cite{Hi}. It consists of terms (3.16)-(3.20), (3.23) and (3.25).\vskip+2mm

Recall that a (right) semi-loop is a set $A$ equipped with two binary operations $\cdot$, $/$, and a constant $e$ which satisfies identities (3.5)-(3.7). $A$ is called a (right) divisible involutory groupoid  if it satisfies identities (3.5)-(3.6) \cite{H}. For both varieties, the set (i) of Theorem 2.6 consists of terms (3.11)-(3.13), which coincides with the set of terms found by Hala\v{s} \cite{H} for divisible involutory groupoids, and almost coincides with the set of terms found by B\v{e}lohl\'{a}vek and Chajda \cite{BC} for semi-loops. The latter one consists of term (3.11) and the terms which can be obtained from (3.13) and (3.14) by permuting the external numerators and denominators, i.e. the terms
\begin{equation}
(x_1x_2)/((y_1x_1)(y_2x_2)),
\end{equation}
\begin{equation}
(x_1/ x_2)/((y_1x_1)/(y_2x_2)).
\end{equation}
\vskip+4mm

Recall that the BIT speciale terms for the variety of Heyting semi-lattices and Heyting algebras were found by Johnstone \cite{J} (observe that $0$ is the constant $\mathsf{T}$, so that $0$-ideals are filters in this case). For these varieties the terms from any set given by Theorem 2.6 take very cumbersome forms, and we do not give them here. For the variety of Boolean algebras, we have at least two collections of BIT speciale terms -- one of them is the same as that of the variety of Heyting algebras. The second one is obtained from the group structure of Boolean algebras. For both collections of BIT speciale terms, some of terms from any set given by Theorem 2.6 are cumbersome.

\section{Acknowledgments}

Financial support from  Shota Rustaveli  Georgian National Science Foundation
(Ref.: FR-18-10849) is gratefully acknowledged.

\vskip
+5mm
\vskip+3mm

\textit{Author's address:
Dali Zangurashvili, A. Razmadze Mathematical Institute of Tbilisi State University,}
\textit{6 Tamarashvili Str., Tbilisi 0177, Georgia, e-mail: dali.zangurashvili@tsu.ge}


\begin{thebibliography}{99}

% 1
\bibitem{BC} R. B\v{e}lohl\'{a}vek, I. Chajda, Congruences and ideals in semiloops, Acta Sci. Math. (Szeged), 59(1994), 43-47.

\bibitem{B} E. Beutler, An idealtheoretic characterization of varieties of abelian $\Omega$-groups, Algebra Universalis 8(1978), 91-100.

%\bibitem{BC} F. Borceux, M.M. Clementino, Topological protomodular algebras, {\em Theory Appl. Categ.}, 153(2006), 3085-3100.

%\bibitem{BC1} F. Borceux, M.M. Clementino, Topological semi-Abelian algebras, {\em Adv. Math}, 190(2005), 425-453.



\bibitem{Bo} D. Bourn, Normalization equivalence, kernel equivalence and affine categories, {\em Lecture notes in Math}, 1448(1991), 43-62.

\bibitem{BJ} D. Bourn, G. Janelidze, Characterization of protomodular varieties of universal algebras, {\em Theory Appl. Categ.}, 11(2002), 143-147.

\bibitem{Br} R. H. Bruck, Contributions to the theory of loops, Trans. Amer. Math. Soc. 60(1946), 245-354.

\bibitem{GU} H. P. Gumm, A. Ursini, Ideals in universal algebras, Algebra Universalis, 19(1984), 45-54.

%\bibitem{CPS} O. Chein, H.O. Pflugfelder, J. D. H. Smith, eds. Quasigroups and loops: theory and applications. Berlin: Heldermann, 1990.


%\bibitem{JP} P. T. Johnstone, Pedicchio, ...


%\bibitem{C} J. P. Coleman, Separation in topological algebras, {\em Algebra Universalis}, 35(1996), 72-84.

%\bibitem{C1} J. P. Coleman, Topological equivalents to $n$-permutability {\em Algebra Universalis}, 38(1997), 200-209.


%\bibitem{E} R. Engelking, General topology, W. :Polish Scientific Publishers, 1977.

%\bibitem{F} J.P. Fraleigh, "A first course in Abstract
%Algebra", Person, 2004.

%\bibitem{G} H. P. Gumm, Topological implications in $n$-permutable varieties, {\em Algebra Universalis}, 19(1984), 319-321.

\bibitem{H} R. Hala\v{s}, Congruences and ideals of left divisible involutory groupoids, Mathematica Bohemica, 121, N. 3(1996), 269-272.

\bibitem{HL} M. Hartl, B. Loiseau, On actions and strict actions in homological categories, Theory Appl. Categ. 27(2013), n.15, 347-392.
% Internal object actions in homological categories, arXiv:1003.0096v1, 2010.  

\bibitem{Hi} P. J. Higgins, Groups with multiple operators, Proc. London Math. Soc. (3), 6(1956), 366-416.

\bibitem{JMT} G. Janelidze, L. M\'arki, W. Tholen, Semi-abelian categories, {\em J. Pure Appl. Algebra},
 168(2002), 367-386.
 
\bibitem{JMT1} G. Janelidze, L. M\'arki, A. Ursini, Ideals and clots in universal algebra and semi-abelian categories, J. Algebra 307(2007), 191-208. 
 
 \bibitem{J} P. T. Johnstone, A note on the semiabelian variety of Heyting semilattices, in: Galois theory, Hopf algebras, and Semiabelian categories, Fields Inst. Commun. 43(2004), 317-318.

 
 \bibitem{M} S. MacLane, Duality for groups, {\em Bull. Amer. Math. Soc.},
 56(1950), 485-516.
 
 \bibitem{Ma} R. Magari, Su una classe equazionale di algebre. Ann. di Mat. pura e applic. (44)65 (1967), 277-311.
 
% \bibitem{Mal} A. I. Mal'cev, On the general theory of algebraic systems, Amer. Math. Soc., Transl. (2) 27, (1954), 125-142.
 
 \bibitem{MM} S. Mantovani, G. Metere, Normalities and commutators, J. Algebra, 324(2010), 2568-2588.
 
%  \bibitem{Mat} G. Matthiessen, Von normalen Mengen determinierte Kongruenzen, unpublished, Bremen, 1972.
  
   \bibitem{S} J. Slomi\'nski, On the determining of the form of congruences in abstract algebras with equationally definable constant elements, Fund. Math. 48 (1960) 325–341.
 
 \bibitem{U} A. Ursini, Sulle variet\'a di algebre con una buona teoria degli ideali, Boll. Un. Mat. Ital. (4), 6(1972), 90-95.
 
 \bibitem{U1} A. Ursini, Osservazioni sulla variet\'a BIT, Boll. Un. Mat. Ital.(4)7(1973), 205-211.
 
 \bibitem{U2} A. Ursini, On subtractive varieties, I, Algebra Universalis 31(1994), 204-222.
 
 \bibitem{Z} D. Zangurashvili, Associativity-like conditions for protomodular algebras, Algebra Universalis 81:1 (2020).
 
 \bibitem{Z1} D. Zangurashvili, Right-cancellable protomodular algebras, https://arxiv.org/pdf/2103.00278.pdf
 
 %\bibitem{JMT}
 
 
 
 %\bibitem{JMT}

%\bibitem{KS} K. Kearnes, L. Sequeira, Hausdorff properties of topological algebras, {\em Algebra Universalis}, 47(2002), 367-386.

%\bibitem{T} W. Taylor, Varieties of topological algebras, {\em Austral. Math. Soc}, 23(Series A)(1977), 207-241.
\end{thebibliography}
\end{document}